\documentclass[12pt, twoside,a4paper]{amsart}
\usepackage[cp1251]{inputenc}
\usepackage[english,  ukrainian, russian]{babel}
\usepackage{amsmath, amsthm, amsfonts, amssymb}
\DeclareMathOperator*\uplim{\overline{lim}}
\DeclareMathOperator*\unlim{\underline{lim}}

\theoremstyle{plain}
\newtheorem{theorem}{Theorem}
\newtheorem{lemma}{Lemma}
\newtheorem{corollary}{Corollary}
\theoremstyle{definition}

\theoremstyle{remark}

\begin{document}
\pagestyle{headings}
\setcounter{page}{105}
 \hbox to \textwidth{\footnotesize\textsc Математичні Студії Volume 66, No.1 
 	\hfill
 	Matematychni Studii. V.66, No.1}
 	\noindent\textup{\scriptsize ISSN 1027-4634 (print)} \hfill \textup{\scriptsize ISSN 2411-0620 (online)}

\markboth{\textsc{A.\ A.\ Dorogovtsev, I.\ I.\ Nishchenko}}{\textsc{Visitation measures for random iterations}}
\begin{center} \textsc {A.\ A.\ Dorogovtsev, I.\ I.\ Nishchenko} \end{center}
 \begin{center} 
\bf \MakeUppercase{{Visitation measures for non-linear iterations with random perturbations}} \end{center}

{A.\ A.\ Dorogovtsev, I.\ I.\ Nishchenko}.\
 		{\it{Visitation measures for non-linear iterations with random perturbations}}, Mat.Stud.V.66, No.1(2026), 105-112\footnote{ 
\noindent{2020 {\it AMS Mathematics Subject Classification}:{60F15, 60F99, 37H12, 40A35}}.

\noindent{\it Keywords}:{random iterations; counting measures; visitation measures; statistical convergence; limit points}.

\noindent{doi:10.30970/ms.66.1.105-112. This work is licensed under CC BY-NC-ND 4.0}

\noindent{*Corresponding author: I.I.Nischenko}

\hfill \copyright{A.\ A.\ Dorogovtsev, I.\ I.\ Nishchenko, 2026} %(to fill in by the Editors)
}. 
\vspace{3pt}

A sequence of non-linear iterations with non-identically distributed random perturbations is studied. The asymptotic behaviour of the sequence is described via its visitations measures, which are deterministic measures that allow one to estimate the time spent by the sequence in the neighbourhoods of its partial limits. The results obtained are compared with statements regarding the statistical convergence of the sequence of iterations under study 

\renewcommand{\refname}{References}
\renewcommand{\proofname}{Proof}

\vskip10pt

\noindent\textbf{1.~Introduction and preliminaries.} 
    
In this paper we consider nonlinear perturbed iterations of the form
\begin{equation}\label{IterationsMain}
 \begin{gathered}
 	x_{n+1} = \varphi(x_n)+\xi_{n+1},\quad n\geqslant 0, \\
     x_0=x\in\mathbb{R},
  \end{gathered}
 \end{equation}
where $\varphi\colon\mathbb{R}\to\mathbb{R}$ is a function that satisfies the Lipschitz condition on $\mathbb{R}$ 
\begin{equation}\label{LipschitzCondition}
\forall x, y \in \mathbb{R}\colon\quad |\varphi(x)- \varphi(y)|\leqslant\alpha\,|x-y|
\end{equation}
with a constant $\alpha < 1$, and $\{\xi_n,\,n\geqslant 1\}$ is a sequence of independent and non-identically distributed random variables. The aim of this work is to investigate the set of partial limits of the sequence of iterations $\{x_n,\,n\geqslant 1\}$ and to study the asymptotic properties of the sequence of counting measures 
$\{\displaystyle{\sum\limits_{k=1}^n\,\delta_{x_k},\, n\geqslant 1}\}$ that describe frequencies with which the sequence $\{x_n,\,n\geqslant 1\}$ falls within the neighbourhoods of its partial limits.

Our analysis is based on the notion of {\it a sequence of visitation measures}, introduced by A.A. Dorogovtsev in \cite{Dorogovtsev1}. 
In what follows, all the necessary definitions are provided to recall this notion. 

\noindent{\textbf{Definition 1.}}(\cite{Dorogovtsev1})
A sequence of finite measures  $\{\lambda_n,\,n\geqslant 1\}$ on $\mathbb{R}^d$ is said to be {\it increasing}  if 
	\[
	\forall A\in\mathcal{B}(\mathbb{R}^d)\; \forall n\geqslant 1\colon\quad \lambda_{n+1}(A)\geqslant\lambda_n(A),\,
	\lim\limits_{n\to\infty}\,\lambda_n(\mathbb{R}^d) = +\infty,
	\]
where $\mathcal{B}(\mathbb{R}^d)$ is the Borel $\sigma$-algebra on $\mathbb{R}^d$.

\noindent{\textbf{Definition 2.}}(\cite{Dorogovtsev1})
For an increasing sequence of measures $\{\lambda_n,\,n\geqslant 1\}$  the set  
\[
\mathfrak{M}_{\lambda}:=\biggl\{x\in\mathbb{R}^d\colon\quad \forall\varepsilon > 0\; \lim\limits_{n\to\infty}\,\lambda_n(B(x, \varepsilon)) = +\infty\biggr\}
\]
is called {\it the limit set} of the sequence  $\{\lambda_n,\,n\geqslant 1\}$. Here
$B(x, \varepsilon)$ is  an open ball of radius $\varepsilon$ centered at the point  $x$. Elements of the set $\mathfrak{M}_\lambda$ are called {\it the points of growth} of the sequence $\{\lambda_n,\,n\geqslant 1\}$.

\noindent{\textbf{Definition 3.}}(\cite{Dorogovtsev1})
Two increasing sequences of measures
$\{\lambda_n,\,n\geqslant 1\}$, $\{\mu_n,\,n\geqslant 1\}$ 	are called {\it equivalent} if the following conditions hold:
\begin{enumerate}
 \item 
$\mathfrak{M}_\lambda=\mathfrak{M}_\mu =\mathfrak{M}\neq \emptyset$;
\item
for any open sets $U_1$, $U_2$ such that  $\overline{U_1}\subset U_2$, $\mathfrak{M}\cap U_1\neq\emptyset$, $\overline{U_1}$ is a compact, the following relations hold
\begin{equation}\label{Conditions2}
\unlim\limits_{n\to\infty}\,\frac{\lambda_n(U_2)}{\mu_n(U_1)}\geqslant 1,\quad
\unlim\limits_{n\to\infty}\,\frac{\mu_n(U_2)}{\lambda_n(U_1)}\geqslant 1.
\end{equation}
\end{enumerate}

\noindent{\textbf{Definition 4.}}(\cite{Dorogovtsev1})
	Any increasing sequence of measures $\{\mu_n,\,n\geqslant 1\}$, which is equivalent to the sequence of counting measures $\{\lambda_n = \sum\limits_{k=1}^n\,\delta_{x_k},\; n\geqslant 1\}$, is called {\it a sequence of visitation  measures} of the sequence $\{x_n,\,n\geqslant 1\}$.

Let us state two theorems, proven in \cite{Dorogovtsev1}, which are important for a subsequent discussion.

\begin{theorem}(\cite{Dorogovtsev1})\label{ThmDorogovtsev1}
Let $\{\xi_n,\,n\geqslant 1\}$ be a sequence of independent random elements in $\mathbb{R}^d$ such that
\begin{equation}\label{Condition1}
\exists C\in\mathbb{R}\colon \sum\limits_{k=1}^\infty\,P\bigl(\Arrowvert\xi_k\Arrowvert > C\bigr) < +\infty.
\end{equation}
Then with probability one the non-random sequence of measures
$
\bigl\{\mu_n = \sum\limits_{k=1}^n\,P\circ\xi_k^{-1},\,n\geqslant 1\bigr\}
$
is a sequence of visitation  measures for $\{\xi_n,\,n\geqslant 1\}$.
\end{theorem}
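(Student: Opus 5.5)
The plan is to reduce everything to countably many fixed Borel sets and, for each such set, to a strong law of large numbers for independent Bernoulli variables. Fix a Borel set $A$ and put $p_k=P(\xi_k\in A)$. Then $\lambda_n(A)=\sum_{k=1}^n 1\{\xi_k\in A\}$ is a sum of independent indicators, and $\mu_n(A)=\sum_{k=1}^n p_k$ is its mean. I would prove the following dichotomy.

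\textbf{Case 1: $\mu_n(A)\to\infty$.} Then $\lambda_n(A)/\mu_n(A)\to 1$ a.s. Apply Kolmogorov's strong law for independent centred variables with normalising sequence $b_n=\mu_n(A)\uparrow\infty$. It requires
\[
\sum_k \frac{p_k(1-p_k)}{\mu_k(A)^2}\le\sum_k \frac{\mu_k(A)-\mu_{k-1}(A)}{\mu_k(A)^2}<\infty .
\]
This holds by comparison with $\int^\infty t^{-2}\,dt$, summing only from the first $k$ with $\mu_k(A)>0$.

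\textbf{Case 2: $\mu_n(A)$ stays bounded.} Then $\lambda_n(A)$ stays bounded a.s. by the first Borel--Cantelli lemma.

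Since a countable family of a.s.\ statements holds simultaneously a.s., I would apply this dichotomy to the countable class $\mathcal{V}$ of all finite unions of open balls with rational centres and rational radii.

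\textbf{Limit sets.} For $x\in\mathbb{R}^d$ and $\varepsilon>0$ there is a rational ball $B$ with $x\in B\subset B(x,\varepsilon)$. Monotonicity of the measures then gives $x\in\mathfrak{M}_\mu$ iff $\mu_n(B)\to\infty$ for every rational ball $B\ni x$. The same characterisation holds for $\lambda$. By the dichotomy, on the full-measure event, $\mu_n(B)\to\infty$ iff $\lambda_n(B)\to\infty$ for each rational ball $B$. Hence $\mathfrak{M}_\lambda=\mathfrak{M}_\mu$.

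\textbf{Non-emptiness.} Condition (\ref{Condition1}) gives $\mu_n(\{\|y\|>C\})\le\sum_k P(\|\xi_k\|>C)<\infty$. Since $\mu_n(\mathbb{R}^d)=n\to\infty$, it follows that $\mu_n(\overline{B}(0,C))\to\infty$. A standard compactness argument then produces a point of growth: repeatedly cover by finitely many closed sets of halving diameter, keep one of infinite limiting mass, and take the intersection of the resulting nested compacts.

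\textbf{Condition 2 of Definition 3.} Let $U_1,U_2$ be open with $\overline{U_1}$ compact, $\overline{U_1}\subset U_2$, and $U_1\cap\mathfrak{M}\ne\emptyset$. Every point of $\overline{U_1}$ has a rational ball around it contained in $U_2$. By compactness, finitely many of these give $V\in\mathcal{V}$ with $\overline{U_1}\subset V\subset U_2$. Since $V$ contains a point of $\mathfrak{M}$, we have $\mu_n(V)\to\infty$, so by Case 1, $\lambda_n(V)/\mu_n(V)\to1$ a.s. Likewise $\lambda_n(U_1)\to\infty$, so the ratios below are eventually well defined. Then
\[
\frac{\mu_n(U_2)}{\lambda_n(U_1)}\ge\frac{\mu_n(V)}{\lambda_n(V)}\to1,
\qquad
\frac{\lambda_n(U_2)}{\mu_n(U_1)}\ge\frac{\lambda_n(V)}{\mu_n(V)}\to1 .
\]
This is exactly (\ref{Conditions2}). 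The exceptional null set does not depend on $U_1,U_2$, because only the countable class $\mathcal{V}$ was used.

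\textbf{Main obstacle.} The one genuinely probabilistic step is the ratio limit in Case 1 when the $p_k$ are arbitrary and non-identical. Kolmogorov's criterion with the random-normalisation-free choice $b_n=\mu_n(A)$, together with the telescoping bound above, is what I would rely on. The rest is a topological reduction to the countable class $\mathcal{V}$, which is what allows a single full-measure event to serve all open sets at once.
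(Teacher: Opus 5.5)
Your proof is correct and complete. The paper never proves this statement itself. It is quoted from \cite{Dorogovtsev1} and later invoked only as ``similar reasoning'' in the proof of Theorem~\ref{ThmVisitationMeasuresIterations}, so there is no in-text argument to compare yours with.

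Your route has three steps. First, you use the strong form of the second Borel--Cantelli lemma: $\lambda_n(A)/\mu_n(A)\to 1$ a.s.\ whenever $\mu_n(A)\to\infty$, obtained from Kolmogorov's criterion and Kronecker's lemma with $b_n=\mu_n(A)$. Second, you apply it simultaneously to the countable class $\mathcal{V}$. Third, you sandwich a single $V\in\mathcal{V}$ between $\overline{U_1}$ and $U_2$.

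This route shows exactly where each hypothesis enters. Independence is used only in Case~1. Condition (\ref{Condition1}) serves only to make $\mathfrak{M}$ non-empty. The equality of the limit sets and both inequalities in (\ref{Conditions2}) hold for every independent sequence.

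The same argument also gives Theorem~\ref{ThmDorogovtsev2} with no new idea. For fixed $j$, the variables $\eta_{j+i(m+1)}$, $i\geqslant 0$, are functions of disjoint blocks of the $\xi$'s and hence independent. So your dichotomy holds along each residue class modulo $m+1$, and therefore for the whole sum. Non-emptiness then follows from $P(\|\eta_k\|>C')\leqslant\sum_{i=0}^{m}P(\|\xi_{k+i}\|>C)$, where $C'$ bounds $\|F\|$ on the set where all arguments have norm at most $C$.
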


\begin{theorem}(\cite{Dorogovtsev1})\label{ThmDorogovtsev2}
Let $\{\xi_n,\,n\geqslant 1\}$ be a sequence of independent random elements in $\mathbb{R}^d$ satisfying (\ref{Condition1}) , $m\geqslant 1$ be  fixed and $F\colon \mathbb{R}^{d(m+1)}\to\mathbb{R}^l$ be a Borel function that maps bounded sets to bounded sets. Then for  the sequence of random elements in $\mathbb{R}^l$
\[
\eta_n = F(\xi_n, \xi_{n+1}, \dots, \xi_{n+m}),\quad n\geqslant 1
\]
the non-random sequence of measures
$
\bigl\{\mu_n = \sum\limits_{k=1}^n\,P\circ\eta_k^{-1},\, n\geqslant 1\bigr\}
$
is with probability one  a sequence of its visitation  measures.
\end{theorem}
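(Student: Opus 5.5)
The plan is to reduce the statement to Theorem~\ref{ThmDorogovtsev1} by splitting the indices into residue classes modulo $m+1$. Within each class the $\eta_n$ are independent, and the remaining work is to recombine the finitely many classes.

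\emph{Step 1: splitting.} For $r\in\{1,\dots,m+1\}$ consider the subsequence $\eta^{(r)}_j=\eta_{r+(j-1)(m+1)}$, $j\geqslant 1$. The blocks $(\xi_n,\dots,\xi_{n+m})$ with $n\equiv r \pmod{m+1}$ are pairwise disjoint. Hence the $\eta^{(r)}_j$, $j\geqslant 1$, are independent random elements of $\mathbb{R}^l$.

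\emph{Step 2: checking condition (\ref{Condition1}).} Let $C$ be the constant from (\ref{Condition1}) for $\{\xi_n\}$. Since $F$ maps bounded sets to bounded sets, there is $C'$ with $\|F(y)\|\leqslant C'$ whenever all $d$-blocks of $y$ have norm at most $C$. Then
\[
P(\|\eta_n\|>C')\leqslant\sum_{j=0}^m P(\|\xi_{n+j}\|>C),
\]
so $\sum_n P(\|\eta_n\|>C')\leqslant (m+1)\sum_k P(\|\xi_k\|>C)<\infty$. Each subsequence therefore satisfies (\ref{Condition1}). Theorem~\ref{ThmDorogovtsev1} then shows that a.s. $\mu^{(r)}_N=\sum_{j\leqslant N}P\circ(\eta^{(r)}_j)^{-1}$ is a sequence of visitation measures for $\{\eta^{(r)}_j\}$. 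In particular the counting measures $\lambda^{(r)}$ and $\mu^{(r)}$ have a common nonempty limit set $\mathfrak{M}^{(r)}$. We intersect these $m+1$ full-probability events.

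\emph{Step 3: recombining.} For every $n$ we have $\lambda_n=\sum_r\lambda^{(r)}_{N_r(n)}$ and $\mu_n=\sum_r\mu^{(r)}_{N_r(n)}$, where $N_r(n)\to\infty$ and the $N_r(n)$ differ by at most one. A point $x$ is a point of growth of a finite sum of increasing sequences iff it is a point of growth of one of the summands. Hence $\mathfrak{M}_\lambda=\bigcup_r\mathfrak{M}^{(r)}=\mathfrak{M}_\mu\neq\emptyset$.

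For condition (\ref{Conditions2}), take $U_1,U_2$ as in Definition~3 and an intermediate open $V$ with $\overline{U_1}\subset V$, $\overline V\subset U_2$, $\overline V$ compact. Call a class $r$ \emph{active} if $\mu^{(r)}_N(U_1)\to\infty$.
\begin{itemize}
\item For an active class, compactness of $\overline{U_1}$ gives a point of growth of $\mu^{(r)}$ in $\overline{U_1}\subset V$, so $\mathfrak{M}^{(r)}\cap V\neq\emptyset$. Theorem~\ref{ThmDorogovtsev1}, applied to the pair $(V,U_2)$, then yields
\[
\liminf_N \frac{\lambda^{(r)}_N(U_2)}{\mu^{(r)}_N(V)}\geqslant 1,
\]
and $\mu^{(r)}_N(V)\geqslant\mu^{(r)}_N(U_1)$.
\item The inactive classes contribute a bounded amount to $\mu_n(U_1)$.
\end{itemize}
Since $U_1$ meets $\mathfrak{M}$, we have $\mu_n(U_1)\to\infty$, so the bounded part is negligible. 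Summing over active classes (shifting indices by at most one changes each term by at most $1$) gives $\liminf_n \lambda_n(U_2)/\mu_n(U_1)\geqslant 1$. The second inequality is proved symmetrically, with the roles of $\lambda^{(r)}$ and $\mu^{(r)}$ exchanged.

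The main obstacle is Step~3. A class with growing mass on $U_1$ need not have a growth point inside $U_1$ itself, only in $\overline{U_1}$. This is why Theorem~\ref{ThmDorogovtsev1} must be applied to the enlarged set $V$ rather than directly to $U_1$; this is the point that needs care.
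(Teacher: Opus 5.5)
Your argument is correct. The paper gives no proof of this statement: it is quoted from \cite{Dorogovtsev1} and used only as a black box in Lemma~\ref{VisitationMeasuresTruncated}, so there is no in-paper argument to compare against.

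Your route first removes the $(m+1)$-dependence of $\{\eta_n\}$ by passing to residue classes modulo $m+1$. Each class consists of independent elements of $\mathbb{R}^l$ satisfying (\ref{Condition1}) with the constant $C'$, so Theorem~\ref{ThmDorogovtsev1} applies. What remains is purely deterministic: equivalence of counting and visitation measures survives a finite sum taken with the common time change $N_r(n)$. What this buys is that you never reopen the probabilistic core of Theorem~\ref{ThmDorogovtsev1}. The cost is exactly the point you flag: a class can have growing mass on $U_1$ with no growth point inside $U_1$. Your intermediate set $V$, combined with the compactness argument, handles this correctly. Such a $V$ exists because the compact set $\overline{U_1}$ has positive distance from the closed set $\mathbb{R}^l\setminus U_2$.

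Three small points of presentation:
\begin{enumerate}
\item The claim that a growth point of a finite sum is a growth point of some summand needs a one-line pigeonhole argument. The summand that blows up on $B(x,\varepsilon)$ may a priori depend on $\varepsilon$. Take $\varepsilon=1/k$, fix a class index that occurs for infinitely many $k$, and use monotonicity in $\varepsilon$.
\item The remark about shifting indices by at most one is superfluous. Both $\lambda_n$ and $\mu_n$ split with the same $N_r(n)$, so $\lambda_n(U_2)\geqslant(1-\varepsilon)\bigl(\mu_n(U_1)-K\bigr)$ for large $n$ follows directly. Here $K$ bounds the contribution of the inactive classes, and $\mu_n(U_1)\to\infty$ finishes the estimate.
\item In the symmetric inequality, the active classes must be redefined by $\lambda^{(r)}_N(U_1)\to\infty$. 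The growth point of $\lambda^{(r)}$ found in $\overline{U_1}$ then lies in $\mathfrak{M}^{(r)}$ because the limit sets of $\lambda^{(r)}$ and $\mu^{(r)}$ coincide.
\end{enumerate}
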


From the theorems given above, it is clear that visitation measures of a sequence of random variables are much simpler than its counting measures, yet contain information about the time spent by the sequence  in the neighbourhoods of its partial limits. In  \cite{Dorogovtsev1} this notion is applied to the study of the asymptotic behaviour of linear perturbed iterations
\[
\begin{gathered}
	x_{n+1} = Ax_n +\xi_{n+1},\quad n\geqslant 0,\\
	x_0=u\in\mathbb{R}^d
\end{gathered}
\]
with a linear operator $A$ in $\mathbb{R}^d$ and non-identically distributed perturbations $\{\xi_n,\,n\geqslant 1\}$.  In \cite{Dorogovtsev2} a similar technique is used to prove a local ergodic theorem for a sequence of nonlinear perturbed iterations of the form
\[
\begin{gathered}
	x_{n+1} = \varphi(x_n) +\xi_{n+1},\quad n\geqslant 0,\\
	x_0=u\in\mathbb{R}^d,
\end{gathered}
\]
with continuous function $\varphi\colon\mathbb{R}^d\to\mathbb{R}$. This theorem establishes the conditions under which for any continuous function $f\colon\mathbb{R}^d\to\mathbb{R}$ with compact support there exists a finite limit
\[\lim\limits_{n\to\infty}\,\frac{1}{\gamma_n}\,\sum\limits_{n\to\infty}\,f(x_k),
\]
where the normalizing sequence $\{\gamma_n,\,n\geqslant 1\}$ is obtained from the sequence of visitation measures of $\{x_n,\,n\geqslant 1\}$ and may grow more slowly than $n$.

In \cite{Vlasenko} it is demonstrated that  investigating visitation measures for $\{x_n/a_n,\,n\geqslant 1\}$ with various numerical sequences $\{a_n,\,n\geqslant 1\}$ that monotonically decrease to zero, turns out to be an effective method to study the nature of convergence to zero of the sequence of random variables $\{x_n,\,n\geqslant 1\}$ itself. 

Note that visitation measures entirely determine the frequencies with which a sequence of random variables visits the neighbourhoods of its partial limits. The idea of counting the number of times a sequence falls within the neighbourhoods of a given point goes back to  H.Steinhaus \cite{Steinhaus} and H. Fast \cite{Fast}, who independently introduced the concept of statistical convergence. This type of convergence has since been extensively developed and investigated by many authors.  We recall that a sequence $\{a_n,\,n\geqslant 1\}$ of elements of a metric space $(\mathcal{X}, d)$  \emph{converges  statistically} to an element $a\in\mathcal{X}$, if for any $\varepsilon > 0$
\begin{equation}
    \frac{1}{n}\,\sum\limits_{k=1}^n\,\mathbb{I}_{\{d(a_k, a) > \varepsilon\}} \, \to 0,\quad n\to\infty.
\end{equation}
This type of convergence allows one to identify  <<the main>> among the possible partial limits of a sequence, whose neighbourhoods are visited much more frequently than the neighbourhoods of the others. Note that the notion  of visitation measures is far more general and  informative, allowing  one to say how often a sequence of random variables visits the neighbourhoods of any of its partial limits. In particular, a conclusion can be drawn about the existence of a statistical limit.

\noindent{\textbf{Example.}}
Let $\{\eta_n,\,n\geqslant 1\}$ be a sequence of {\it iid} standard Gaussian random variables, and let
$\xi_n = \eta_n/\sqrt{2\ln (n+1)},\, n\geqslant 1
$.
Using the Borel-Cantelli lemma, it is straightforward to show that  with probability one the segment  $[-1, 1]$ is the set of partial limits of the sequence $\{\xi_n,\,n\geqslant 1\}$.  According to Theorem \ref{ThmDorogovtsev1} the sequence  $\{\sum\limits_{k=1}^n\,P\circ \xi_k^{-1},\,n\geqslant 1\}$
is a sequence of visitation measures for the sequence $\{\xi_n,\,n\geqslant 1\}$. Therefore, for 
any $\varepsilon\in (0, 1)$
\[
\lim\limits_{n\to\infty}\,\frac{1}{n}\,\sum\limits_{k=1}^n\,\mathbb{I}_{\{|\xi_k| > \varepsilon\}} = \lim\limits_{n\to\infty}\,\frac{1}{n}\,
\sum\limits_{k=1}^n\,P(|\xi_k| > \varepsilon) = 0
\]
and thus one immediately  deduces the fact of statistical convergence of $\{\xi_n,\,n\geqslant 1\}$ to zero,
\[
S-\lim\limits_{n\to\infty}\,\xi_n = 0,\quad a.s.
\]
The use of visitation measures allows us not only to find zero as <<the essential>>  partial limit of $\{\xi_n,\,n\geqslant 1\}$, but also to determine the asymptotic behaviour of the frequencies with which this sequence falls within the neighbourhoods of other partial limits. Namely, by applying to the probability
\[
P(\xi_n \in [\alpha, \beta]) = P\left(\eta_n \in \bigl[\sqrt{2\ln (n+1)}\cdot\alpha, \sqrt{2\ln (n+1)}\cdot\beta\bigr]\right)
\]
the well-known estimate 
\[
\left(\frac{1}{x}-\frac{1}{x^3}\right)\,\cdot \frac{1}{\sqrt{2\pi}}\,e^{-x^2/2}\leqslant P(\eta > x)\leqslant \frac{1}{x}\cdot \frac{1}{\sqrt{2\pi}}\,e^{-x^2/2},\quad x >0
\]
for a standard Gaussian random variable $\eta$, one can verify that for any $0 <  \alpha <\beta < 1$ and any $\varepsilon > 0$
\[
\begin{gathered}
\frac{1}{n^{1-\alpha^2+\varepsilon}}\,
\sum\limits_{k=1}^n\,P\bigl(\xi_k \in (\alpha, \beta)\bigr)\,\to  0, \quad n\to\infty, \\
\frac{1}{n^{1-\alpha^2 -\varepsilon}}\,
\sum\limits_{k=1}^n\,P\bigl(\xi_k \in (\alpha, \beta)\bigr)
\,\to  \infty, \quad n\to\infty,
\end{gathered}
\]
and thus due to the equivalence of counting and corresponding visitations measures
\[
\begin{gathered}
\frac{1}{n^{1-\alpha^2+\varepsilon}}\,
\sum\limits_{k=1}^n\,\mathbb{I}_{\{\xi_k \in (\alpha, \beta)\}}\,\to  0, \quad n\to\infty,\quad a.s.\\
\frac{1}{n^{1-\alpha^2 -\varepsilon}}\,
\sum\limits_{k=1}^n\,\mathbb{I}_{\{\xi_k \in (\alpha, \beta)\}}
\,\to  \infty, \quad n\to\infty,\quad a.s.
\end{gathered}
\]
Moreover, for any $a<\alpha_1<\beta_1<\alpha_2<\beta_2<1$
\[
\lim\limits_{n\to\infty}\,\frac{\sum\limits_{k=1}^n\,P\bigl(\xi_k \in (\alpha_2, \beta_2)\bigr)}{\sum\limits_{k=1}^n\,P\bigl(\xi_k \in (\alpha_1, \beta_1)\bigr)} =0,
\]
and therefore the same asymptotic behaviour holds for the counting measures
\[
\lim\limits_{n\to\infty}\,\frac{\sum\limits_{k=1}^n\,\mathbb{I}\bigl(\xi_k \in (\alpha_2, \beta_2)\bigr)}{\sum\limits_{k=1}^n\,\mathbb{I}\bigl(\xi_k \in (\alpha_1, \beta_1)\bigr)} = 0,\quad a.s.
\]

\noindent{\textbf{2.~Main results.}}
For any $m\geqslant 1$ define the   sequence 
$\{x_n^m,\,n\geqslant m\}$ as follows
\begin{equation}\label{TruncatedSequence1}
	x^m_n=\xi_{n}+\varphi(\xi_{n-1}+\varphi(\xi_{n-2}+\dots+\varphi(\xi_{n-m+1}+\varphi(x_0))\dots),\quad n\geqslant m.
\end{equation}	

\begin{lemma}\label{LemmapropertiesTruncatedSeq}
Let $\{\xi_n,\,n\geqslant 1\}$ be a sequence of independent random variables in $\mathbb{R}$ satisfying (\ref{Condition1}). Then
\begin{equation}    \lim\limits_{m\to\infty}\,\sup\limits_{n\geqslant m}\,|x^m_n - x_n| =0 \quad a.s.
\end{equation}
\end{lemma}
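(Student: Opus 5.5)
The plan is to compare $x_n$ and $x^m_n$ as two trajectories of the same random dynamical system driven by the same noise $\xi_{n-m+1},\dots,\xi_n$, started at time $n-m$ from different points. Then the contraction (\ref{LipschitzCondition}) makes their distance decay like $\alpha^m$. What remains is to show that the starting discrepancy is almost surely bounded uniformly in $n$. That reduces everything to the almost sure boundedness of $\{x_n,\,n\geqslant 1\}$, which is where (\ref{Condition1}) enters.

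\textbf{Step 1 (reduction to a contraction estimate).} Fix $m$ and $n\geqslant m$. Put $y_{n-m}=x_0$ and $y_{k+1}=\varphi(y_k)+\xi_{k+1}$ for $n-m\leqslant k<n$. Unwinding (\ref{TruncatedSequence1}) shows that $x^m_n=y_n$, while by (\ref{IterationsMain}) $x_n$ is obtained by the same recursion started from $x_{n-m}$ at time $n-m$. Subtracting, the noise cancels at each step, and (\ref{LipschitzCondition}) gives $|x_{k+1}-y_{k+1}|\leqslant\alpha|x_k-y_k|$. Iterating $m$ times yields
\[
|x_n-x^m_n|\leqslant \alpha^m\,|x_{n-m}-x_0|\leqslant \alpha^m\Bigl(\sup_{k\geqslant 0}|x_k|+|x|\Bigr).
\]
For $n=m$ the right-hand side is $0$, consistent with $x^m_m=x_m$. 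Hence it suffices to prove that $\sup_{k\geqslant 0}|x_k|<\infty$ almost surely, since $\alpha<1$.

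\textbf{Step 2 (almost sure boundedness).} By (\ref{Condition1}) and the Borel--Cantelli lemma, there is almost surely a random $N$ with $|\xi_k|\leqslant C$ for all $k>N$. From (\ref{LipschitzCondition}) we have $|\varphi(u)|\leqslant|\varphi(0)|+\alpha|u|$, so for $n\geqslant N$
\[
|x_{n+1}|\leqslant \alpha|x_n|+|\varphi(0)|+C .
\]
Induction then gives
\[
|x_n|\leqslant \alpha^{n-N}|x_N|+\frac{|\varphi(0)|+C}{1-\alpha}, \qquad n\geqslant N.
\]
The finitely many terms $x_0,\dots,x_N$ are finite, so $\sup_k|x_k|<\infty$ almost surely. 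Combining this with Step 1 gives $\sup_{n\geqslant m}|x^m_n-x_n|\leqslant \alpha^m K(\omega)\to0$ almost surely, where $K(\omega)$ is finite almost surely.

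The only point requiring care is the index bookkeeping in Step 1, namely checking that (\ref{TruncatedSequence1}) really is the $m$-step iteration restarted from $x_0$ at time $n-m$. The substantive input is Step 2, where (\ref{Condition1}) replaces any moment assumption on the non-identically distributed $\xi_n$.
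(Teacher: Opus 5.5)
Your proof is correct and follows the paper's argument. Both use the $m$-fold contraction bound $|x^m_n-x_n|\leqslant\alpha^m|x_{n-m}-x_0|$, and then the almost sure boundedness of $\{x_n\}$, obtained from (\ref{Condition1}) via Borel--Cantelli. Your Step 2 is slightly more careful than the paper's bound $\sup_n|x_n|\leqslant\frac{1}{1-\alpha}\sup_n|\xi_n|$, which omits the $|\varphi(0)|$ and $|x_0|$ contributions; the omission is harmless for finiteness.
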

\begin{proof}
Note that  for any $n\geqslant m$
\[
|x^m_n-x_n|\leqslant \alpha\,|x^{m-1}_{n-1}-x_{n-1}|\leqslant\dots\leqslant \alpha^m\,|x_0 - x_{n-m}|\leqslant
\alpha^m\,(|x_0|+\sup_{n\geqslant 1}\,|x_n|).
\]
From condition (\ref{Condition1}) it follows that
\[
\sup_{n\geqslant 1}\,|\xi_n| \,< +\infty \quad a.s.
\]
and then, since $\alpha<1$,
\[
\sup_{n\geqslant 1}\,|x_n|\leqslant \frac{1}{1-\alpha} \,\sup_{n\geqslant 1}\,|\xi_n|< +\infty \quad a.s.
\]
Hence,
\[
\lim\limits_{m\to\infty}\,\sup\limits_{n\geqslant m}\,|x^m_n - x_n| =0 \quad a.s.
\]
\end{proof}

\begin{lemma}\label{VisitationMeasuresTruncated}
Let $\{\xi_n,\,n\geqslant 1\}$ be a sequence of independent random variables in $\mathbb{R}$ satisfying (\ref{Condition1}). Then
for any $m\geqslant 1$ the sequence 
$\{x_n^m,\,n\geqslant m\}$  with probability one has as its visitation measures the sequence
$\{\varkappa_n^m=\sum\limits_{k=m}^n\,P\circ (x^m_k)^{-1},\,n\geqslant m\}$.
\end{lemma}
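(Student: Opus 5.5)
The plan is to reduce the statement to Theorem \ref{ThmDorogovtsev2} with the window length $m-1$. For fixed $m$, the definition (\ref{TruncatedSequence1}) writes $x^m_n$ as a deterministic function of the block $(\xi_{n-m+1},\dots,\xi_n)$:
\[
x^m_n = F_m(\xi_{n-m+1},\xi_{n-m+2},\dots,\xi_n),
\]
where $F_m\colon\mathbb{R}^m\to\mathbb{R}$ is defined recursively. Set $G_0 = \varphi(x_0)$ and $G_j(y_1,\dots,y_j)=y_j+\varphi(G_{j-1}(y_1,\dots,y_{j-1}))$, with the convention that for $j=1$ this means $y_1+\varphi(x_0)$. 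Then $F_m=G_m$. Re-indexing with $k=n-m+1\geqslant 1$, we get $\eta_k := x^m_{k+m-1}=F_m(\xi_k,\dots,\xi_{k+m-1})$, which is exactly the form required in Theorem \ref{ThmDorogovtsev2}. The case $m=1$ is the degenerate one-variable case, which is directly Theorem \ref{ThmDorogovtsev1} applied to the shifted sequence $\xi_n+\varphi(x_0)$; alternatively, Theorem \ref{ThmDorogovtsev2} can be applied with a dummy variable.

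Next I will check the hypotheses on $F_m$. Since $\varphi$ is Lipschitz, it is continuous, so each $G_j$ is continuous, hence Borel. For boundedness, by induction $|\varphi(u)|\leqslant |\varphi(0)|+\alpha|u|$. If $|y_i|\leqslant R$ for all $i$, then $|G_j|\leqslant R+|\varphi(0)|+\alpha|G_{j-1}|$, which is bounded by a constant depending only on $R$, $m$, $x_0$ and $\varphi(0)$. So $F_m$ maps bounded sets to bounded sets. Condition (\ref{Condition1}) is assumed directly, and independence of the $\xi_n$ is given.

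Theorem \ref{ThmDorogovtsev2} then says that $\{\sum_{k=1}^N P\circ\eta_k^{-1},\ N\geqslant1\}$ is almost surely a sequence of visitation measures for $\{\eta_k\}$. Translating back via $n=k+m-1$, this sum equals $\varkappa^m_{N+m-1}$, and the counting measures of $\{\eta_k\}$ equal $\sum_{k=m}^{n}\delta_{x^m_k}$. So the equivalence in Definition 3 holds for the index-shifted sequences. I expect the main (minor) obstacle to be this bookkeeping: Definitions 1--4 are stated for sequences indexed from $1$, so one should note that a fixed shift of index does not change limit sets or the $\liminf$ ratios in (\ref{Conditions2}). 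Beyond that, the argument is a direct application, with nothing needing real estimation besides the boundedness of $F_m$.
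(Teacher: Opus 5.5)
Your proposal is correct and follows the paper's proof: write $x^m_n=F_m(\xi_{n-m+1},\dots,\xi_n)$ with $F_m$ continuous (because $\varphi$ is Lipschitz) and invoke Theorem~\ref{ThmDorogovtsev2}. Your extra checks are all sound and fill in details the paper leaves implicit: the explicit bound showing $F_m$ maps bounded sets to bounded sets, the separate treatment of $m=1$ via Theorem~\ref{ThmDorogovtsev1} (Theorem~\ref{ThmDorogovtsev2} needs at least two arguments), and the index-shift bookkeeping. One small tidying point: setting $G_0=x_0$ would make your recursion uniform without the special convention for $j=1$.
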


\begin{proof}
Note that $\{x_n^m,\,n\geqslant m\}$ can  be represented as
\begin{equation}\label{TruncatedSequence2}
x_n^m = F_m(\xi_{n-m+1}, \dots,  \xi_{n}),
\end{equation}
where $F_m\colon\mathbb{R}^m\to\mathbb{R}$ for any $m\geqslant 1$ is defined by the equality
\begin{equation}\label{Function_Fm}
	F_m(u_1, \dots, u_m)=u_m+\varphi(u_{m-1}+\varphi(u_{m-2}+\dots+\varphi(u_1+\varphi(x_0))\dots ).
\end{equation}	
Since the function $\varphi$ in (\ref{TruncatedSequence1}) satisfies the Lipschitz condition (\ref{LipschitzCondition}), then $F_m$ is a continuous function and therefore  the statement of the lemma follows from Theorem \ref{ThmDorogovtsev2}.
   
\end{proof}

 Denote by $L^x_m$ the set of partial limits of the sequence $\{x^m_n,\,n\geqslant m\}$ defined in (\ref{TruncatedSequence1}) and let $L^\xi_m$ be the set of partial limits in $\mathbb{R}^m$ of the vector sequence $\{(\xi_{n-m+1}, \dots, \xi_{n}),\,n\geqslant m\}$. Note that since the random variables $\{\xi_n,\,n\geqslant 1\}$ are independent,  according to Kolmogorov's zero-one law, the set $L^\xi_m$ is deterministic.

\begin{lemma}\label{LemmaLimitSetKsi}
 Let the sequence $\{\xi_n,\,n\geqslant 1\}$ satisfies condition (\ref{Condition1}). Then 
    \begin{equation}
    L^x_m=F_m(L^\xi_m).
    \end{equation}
\end{lemma}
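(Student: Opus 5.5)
We need to show that $L^x_m=F_m(L^\xi_m)$ with probability one, where $x^m_n=F_m(\eta_n)$ and $\eta_n:=(\xi_{n-m+1},\dots,\xi_n)$. The argument rests on two facts: $F_m$ is continuous (it is a composition of translations and the Lipschitz map $\varphi$), and the vector sequence $\{\eta_n,\,n\geqslant m\}$ is a.s. bounded. Boundedness follows from condition (\ref{Condition1}) and the Borel--Cantelli lemma, since $|\xi_n|\leqslant C$ for all large $n$. Consequently, for almost every $\omega$ the set of partial limits of $\{\eta_n(\omega)\}$ is the deterministic set $L^\xi_m$. By Kolmogorov's zero-one law, as noted before the lemma, we fix a full-measure event on which this holds and on which $\sup_n\|\eta_n\|<\infty$, and we argue pathwise.

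\emph{Inclusion $F_m(L^\xi_m)\subset L^x_m$.} Let $y\in L^\xi_m$. Then there is a subsequence with $\eta_{n_j}\to y$. By continuity of $F_m$, $x^m_{n_j}=F_m(\eta_{n_j})\to F_m(y)$, so $F_m(y)$ is a partial limit of $\{x^m_n\}$.

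\emph{Inclusion $L^x_m\subset F_m(L^\xi_m)$.} Let $z\in L^x_m$, so $x^m_{n_j}\to z$ along some subsequence. Since $\{\eta_{n_j}\}$ is bounded in $\mathbb{R}^m$, Bolzano--Weierstrass gives a further subsequence $\eta_{n_{j_i}}\to y$ for some $y$, and then $y\in L^\xi_m$. Continuity gives $F_m(y)=\lim_i x^m_{n_{j_i}}=z$. Hence $z\in F_m(L^\xi_m)$.

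The only real subtlety is this second inclusion. It needs compactness, which is supplied by the a.s. boundedness from (\ref{Condition1}); without it, a partial limit of $x^m_n$ could arise from an unbounded $\eta_n$. A secondary point is that $L^\xi_m$ must mean the a.s. constant value of the random set of partial limits. The identity is therefore understood almost surely, which in turn makes $L^x_m$ deterministic as well.
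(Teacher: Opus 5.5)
Your proof is correct and follows the paper's argument: continuity of $F_m$ gives $F_m(L^\xi_m)\subset L^x_m$, and for the reverse inclusion a.s.\ boundedness from (\ref{Condition1}), Bolzano--Weierstrass and continuity are used. Your choice of a full-measure event on which the pathwise limit set equals the deterministic $L^\xi_m$ makes explicit a point the paper leaves implicit.
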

\begin{proof}
    Let $u\in L^x_m$. This means that there exists a subsequence $\{n_k,\,k\geqslant 1\}$ such that 
    \[
    x^m_{n_k}\,\to u, \quad k\to\infty.
    \]
    By construction, $x^m_{n_k}=F_m(\xi_{n_k-m+1}, \dots, \xi_{n_k})$. From condition (\ref{Condition1}) it follows that the vector sequence $\{(\xi_{n_k-m+1}, \dots, \xi_{n_k}), k\geqslant 1\}$ is bounded with probability one and therefore a convergent subsequence $\{(\xi_{n_{k_l}-m+1}, \dots, \xi_{n_{k_l}}), l\geqslant 1\}$ can be chosen from it. Let 
    \[
    (\xi_{n_{k_l}-m+1}, \dots, \xi_{n_{k_l}})\,\to\,(y_1, \dots, y_m), \quad l\to\infty.
    \]
    Then $(y_1, \dots, y_m)\in L^\xi_m$ and since $F_m$ is continuous, we obtain 
    \[
    F_m(\xi_{n_{k_l}-m+1}, \dots, \xi_{n_{k_l}})\,\to F_m(y_1, \dots, y_m),\quad l\to\infty.
    \]
    Hence, $u\in F_m(L^\xi_m)$. 

     Now assume that $u\in F_m(L^\xi_m)$. Then   $u=F_m(y_1, \dots, y_m)$ for some $(y_1, \dots, y_m)\in L^\xi_m$. This means that there exists a subsequence $\{(\xi_{n_{k}-m+1}, \dots, \xi_{n_{k}}),\,k\geqslant 1\}$ converging to $(y_1, \dots, y_m)$ as $k\to\infty$. From the continuity of $F_m$ it follows that
     \[
     x^m_{n_k} = F_m(\xi_{n_k-m+1}, \dots, \xi_{n_{k}})\,\to F_m(y_1, \dots, y_m) = u.
     \]
     Thus, $u$ is a partial limit of $\{x^m_n,\,n\geqslant 1\}$, that is, $u\in L^x_m$.
     
\end{proof}
\begin{theorem}\label{LimitSet}
The set $L^x$ of partial limits of the sequence of iterations $\{x_n,\,n\geqslant 1\}$ coincides with the intersection of the closures 
\begin{equation} 
L^x = \bigcap_{k=1}^\infty\overline{\bigcup_{m = k}^\infty L^x_m}.
\end{equation}
\end{theorem}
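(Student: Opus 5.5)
We prove the two inclusions separately, working on the event of probability one on which the conclusion of Lemma \ref{LemmapropertiesTruncatedSeq} holds, $\sup_{n\geqslant 1}|\xi_n|<+\infty$ and $\sup_{n\geqslant 1}|x_n|<+\infty$. Put
\[
\delta_m:=\sup_{n\geqslant m}|x^m_n-x_n|, \qquad \delta_m\to 0 \ \text{ as } m\to\infty .
\]

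\textbf{First inclusion.} Let $u\in L^x$ and fix $k\geqslant 1$. We show $u\in\overline{\bigcup_{m\geqslant k}L^x_m}$. Given $\varepsilon>0$, choose $m\geqslant k$ with $\delta_m<\varepsilon$. Take a subsequence $x_{n_j}\to u$ with $n_j\geqslant m$. Then $|x^m_{n_j}-u|\leqslant\delta_m+|x_{n_j}-u|$. The sequence $\{x^m_{n_j}\}$ is bounded, so it has a partial limit $v\in L^x_m$, and this $v$ satisfies $|v-u|\leqslant\varepsilon$. Since $\varepsilon$ is arbitrary, $u$ lies in the closure of $\bigcup_{m\geqslant k}L^x_m$. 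Since $k$ is arbitrary, $u$ lies in the intersection.

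\textbf{Second inclusion.} Let $u$ belong to the intersection. Then for every $k$ there are $m_k\geqslant k$ and $v_k\in L^x_{m_k}$ with $|v_k-u|<1/k$. Because $v_k$ is a partial limit of $\{x^{m_k}_n\}$, we can pick an index $n_k\geqslant m_k$, and moreover $n_k>n_{k-1}$, with $|x^{m_k}_{n_k}-v_k|<1/k$. This is possible because a partial limit is approached along infinitely many indices. Then
\[
|x_{n_k}-u|\leqslant \delta_{m_k}+\tfrac1k+\tfrac1k\to 0,
\]
and $n_k\uparrow\infty$, so $u\in L^x$.

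\textbf{Main obstacle.} The delicate point is that the sets $L^x_m$ are defined almost surely. Each $L^x_m$ is a.s.\ deterministic: by Lemma \ref{LemmaLimitSetKsi} it equals $F_m(L^\xi_m)$, and $L^\xi_m$ is deterministic by the zero-one law. We intersect these countably many probability-one events with the event of Lemma \ref{LemmapropertiesTruncatedSeq}, and the argument above then runs pathwise on the intersection. Hence the equality holds almost surely. Since the right-hand side is deterministic, this also shows that $L^x$ is a.s.\ deterministic.

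The only analytic input is the uniform-in-$n$ approximation $\delta_m\to 0$. Boundedness of the sequences guarantees that the relevant partial limits exist; the key is that $\delta_m$ controls $|x^m_n-x_n|$ uniformly over all $n\geqslant m$, so the diagonal choice of $(m_k,n_k)$ in the second inclusion works.
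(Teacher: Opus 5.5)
Your proof is correct and follows the paper's argument: both inclusions are proved pathwise from the uniform approximation $\sup_{n\geqslant m}|x^m_n-x_n|\to 0$ of Lemma \ref{LemmapropertiesTruncatedSeq}. One direction takes a partial limit of $\{x^m_{n_j}\}$ close to $u$, and the other uses a diagonal choice of $(m_k,n_k)$.

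You are slightly more careful than the paper in forcing $m_k\geqslant k$ and $n_k$ strictly increasing. The determinism of the sets $L^x_m$ discussed in your last paragraph is not needed, since the whole argument already runs pathwise on the event of Lemma \ref{LemmapropertiesTruncatedSeq}.
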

\begin{proof}
  Let $y\in \bigcap\limits_{k=1}^\infty\overline{\bigcup\limits_{m = k}^\infty\,L^x_m}$. Then there exists a subsequence $\{y_{m_k}\in L^x_{m_k},\,k\geqslant 1\}$ such that 
  \[
  y_{m_k}\,\to\,y,\quad k\to\infty.
  \]
  For the sequence $\{x^{m_k}_n,\,n\geqslant m_k\}$ there exists a subsequence $\{x^{m_k}_{n_j},\,j\geqslant 1\}$ such that
  \[
  x^{m_k}_{n_j}\,\to\,y_{m_k},\quad j\to\infty.
  \]
  In particular, there exists a number $n_{j_k}$ for which
  \[
  |x^{m_k}_{n_{j_k}} - y_{m_k}|< \frac{1}{k}.
    \]
    Then
    \[
    x^{m_k}_{n_{j_k}} \,\to\, y,\quad k\to\infty.
    \]
 According to Lemma \ref{LemmapropertiesTruncatedSeq}
    \[
    |x^{m_k}_{n_{j_k}} - x_{n_{j_k}}|\,\to\,0, \quad k\to\infty,
    \]
    and therefore
    \[
    x_{n_{j_k}}\,\to\,y,\quad k\to\infty.
    \]
    Hence, $y$ is a partial limit of $\{x_n,\,n\geqslant 1\}$, that is, $y\in L^x$.

    Now, let  $y\in L^x$. Then there exists a subsequence $\{x_{n_k},\,k\geqslant 1\}$ such that
    \[
    x_{n_k}\,\to \,y,\quad k\to\infty,
    \]
    that is
    \[
    \forall\varepsilon > 0\;\exists k_0\; \forall k\geqslant k_0:\quad |x_{n_k}-y|<\varepsilon.
    \]
    Note that 
    \[
    \exists m_0\; \forall m\geqslant m_0\;\forall k\geqslant 1:\quad |x^m_{n_k}-x_{n_k}|<\varepsilon.
    \]
    From this it follows that
    \[
    \forall k\geqslant k_0\quad |x^{m_0}_{n_k}-y|< 2\varepsilon.
    \]
    This means that there exists a partial limit $y_{m_0}$ of $\{x^{m_0}_n,\,n\geqslant m_0\}$ such that
    \[
    |y_{m_0}-y|\leqslant 2\varepsilon.
    \]
    By choosing $\varepsilon = 1/l$, $l\geqslant 1$ we construct subsequences $\{y_{m_l},\,l\geqslant 1\}$ such that $y_{m_l}\in L^x_m$ and
    \[
    y_{m_l}\,\to\,y,\quad l\to\infty.
    \]
    Hence, for every $k\geqslant 1$
    \[ 
    y\in\overline{\bigcup\limits_{m=k}^\infty\,L^x_m}
    \]
    implying
    \[
    y\in \bigcap_{k=1}^\infty\overline{\bigcup\limits_{m=k}^\infty\,L^x_m}.
    \]
\end{proof}

\begin{lemma}\label{LemmaForTheorem}
Suppose that for a sequence $\{x_n,\,n\geqslant 0\}$ in $\mathbb{R}^d$ and for an increasing sequence of measures   $\{\mu_n,\,n\geqslant 0\}$ on $\mathcal{B}(\mathbb{R}^d)$  the following conditions hold:
\begin{enumerate}
    \item 
    the limit set $\mathfrak{M}_\mu$ is non-empty;
    \item for every positive $\varepsilon>0$ there exists a sequence 
     $\{y_n,\,n\geqslant 0\}$ in $\mathbb{R}^d$  such that 
      $\{\mu_n,\,n\geqslant 0\}$ is its sequence of visitation measures and 
     \[
     \uplim\limits_{n\to\infty}\,\Arrowvert x_n-y_n\Arrowvert \leqslant \varepsilon.
     \]
    Then  $\{\mu_n,\,n\geqslant 0\}$ is a sequence of visitation measures for  $\{x_n,\,n\geqslant 0\}$. 
\end{enumerate}
\end{lemma}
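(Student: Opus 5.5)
The plan is to compare the counting measures $\lambda_n=\sum_{k=1}^n\delta_{x_k}$ with the counting measures $\nu^\varepsilon_n=\sum_{k=1}^n\delta_{y_k}$ of the approximating sequence from condition (2). By hypothesis $\nu^\varepsilon_n$ is equivalent to $\mu_n$, so the task reduces to transferring that equivalence from $\{y_n\}$ to $\{x_n\}$. The value of $\varepsilon$ will be chosen afresh for each pair of sets, which is allowed because condition (2) holds for every $\varepsilon>0$.

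\textbf{Basic comparison.} Fix $\varepsilon$ and the corresponding $\{y_n\}$. Since $\uplim\|x_n-y_n\|\leqslant\varepsilon$, there is $N$ such that $\|x_n-y_n\|<2\varepsilon$ for all $n\geqslant N$. For a set $A$ write $A^{r}=\{z\colon \mathrm{dist}(z,A)<r\}$. If $A^{2\varepsilon}\subset B$, then every index $k\geqslant N$ with $y_k\in A$ satisfies $x_k\in B$, and symmetrically. Hence
\[
\lambda_n(B)\geqslant\nu^\varepsilon_n(A)-N,\qquad \lambda_n(A)\leqslant\nu^\varepsilon_n(B)+N .
\]
The additive constant $N$ will be harmless, because every quantity we compare tends to $+\infty$.

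\textbf{Limit sets.} Let $\mathfrak{M}=\mathfrak{M}_\mu$, which is non-empty by (1); equivalence gives $\mathfrak{M}_{\nu^\varepsilon}=\mathfrak{M}$ for every $\varepsilon$.
\begin{itemize}
\item $\mathfrak{M}_\mu\subset\mathfrak{M}_\lambda$. Take $z\in\mathfrak{M}_\mu$ and $\delta>0$, and choose $\varepsilon<\delta/4$. Then $z\in\mathfrak{M}_{\nu^\varepsilon}$, so $\nu^\varepsilon_n(B(z,\delta/2))\to\infty$. Since $B(z,\delta/2)^{2\varepsilon}\subset B(z,\delta)$, the comparison gives $\lambda_n(B(z,\delta))\geqslant\nu^\varepsilon_n(B(z,\delta/2))-N\to\infty$.
\item $\mathfrak{M}_\lambda\subset\mathfrak{M}_\mu$. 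Take $z\in\mathfrak{M}_\lambda$ and $\delta>0$, and choose $\varepsilon<\delta/2$. Then $\nu^\varepsilon_n(B(z,2\delta))\geqslant\lambda_n(B(z,\delta))-N\to\infty$. As $\delta$ is arbitrary, $z\in\mathfrak{M}_{\nu^\varepsilon}=\mathfrak{M}_\mu$.
\end{itemize}

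\textbf{The ratio inequalities.} Let $U_1,U_2$ be open with $\overline{U_1}$ compact, $\overline{U_1}\subset U_2$ and $U_1\cap\mathfrak{M}\neq\emptyset$. Put $d=\mathrm{dist}(\overline{U_1},U_2^c)>0$, which is positive by compactness. Set $V=U_1^{d/2}$ and $W=U_1^{d/4}$. Both are open, contain $\overline{U_1}$, meet $\mathfrak{M}$, and have compact closures inside $U_2$. Now choose $\varepsilon<d/8$, so that $V^{2\varepsilon}\subset U_2$ and $U_1^{2\varepsilon}\subset W$.
\begin{itemize}
\item First inequality. The comparison gives $\lambda_n(U_2)\geqslant\nu^\varepsilon_n(V)-N$. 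Equivalence of $\nu^\varepsilon$ and $\mu$, applied to the pair $(U_1,V)$, gives $\unlim\nu^\varepsilon_n(V)/\mu_n(U_1)\geqslant1$. Since $U_1$ is open and contains a point of $\mathfrak{M}_\mu$, we have $\mu_n(U_1)\to\infty$. Therefore $\unlim\lambda_n(U_2)/\mu_n(U_1)\geqslant1$.
\item Second inequality. The comparison gives $\lambda_n(U_1)\leqslant\nu^\varepsilon_n(W)+N$. Equivalence applied to the pair $(W,U_2)$ gives $\unlim\mu_n(U_2)/\nu^\varepsilon_n(W)\geqslant1$. Since $\nu^\varepsilon_n(W)\to\infty$, the constant $N$ is negligible. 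Hence $\unlim\mu_n(U_2)/\lambda_n(U_1)\geqslant1$.
\end{itemize}
The denominators cause no trouble, since $\lambda_n(U_1)\to\infty$ because $U_1$ meets $\mathfrak{M}_\lambda=\mathfrak{M}$.

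\textbf{Expected obstacle.} The only delicate point is the choice of the intermediate sets $V$ and $W$. They must satisfy the compactness and inclusion hypotheses of Definition 3, so that the equivalence of $\nu^\varepsilon$ and $\mu$ can be applied. They must also absorb the $2\varepsilon$-shift, which forces $\varepsilon$ to be chosen after $U_1$ and $U_2$.
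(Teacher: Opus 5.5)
Your strategy is the paper's: you transfer the equivalence of $\nu^\varepsilon$ and $\mu$ to the counting measures $\lambda$ by enlarging sets by the approximation error and discarding finitely many indices. Your ratio part is correct and cleaner than the paper's chain of inequalities. You keep the $2\varepsilon$ margin, track the additive constant $N$, and your intermediate sets $V$, $W$ really satisfy the hypotheses of Definition~3. The inclusion $\mathfrak{M}_\mu\subset\mathfrak{M}_\lambda$ is also fine, because there the conclusion concerns the single sequence $\lambda$.

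The gap is in the reverse inclusion $\mathfrak{M}_\lambda\subset\mathfrak{M}_\mu$. The sequence $\{y_n\}$, and hence $\nu^\varepsilon$, changes with $\varepsilon$, and you chose $\varepsilon$ depending on $\delta$. For any fixed $\varepsilon$, your estimate gives $\nu^\varepsilon_n(B(z,r))\to\infty$ only for radii $r>4\varepsilon$. That does not imply $z\in\mathfrak{M}_{\nu^\varepsilon}$: take $x_n\equiv 0$ and $y_n\equiv\varepsilon e$ with $\|e\|=1$. So \emph{as $\delta$ is arbitrary, $z\in\mathfrak{M}_{\nu^\varepsilon}$} is a non sequitur. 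You also cannot reach $\mu$ through condition 2 of Definition~3, since it only applies to sets already meeting $\mathfrak{M}$.

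The missing ingredient is compactness, which is what the paper's step $\overline{B}(u,\varepsilon)\cap L^y=\overline{B}(u,\varepsilon)\cap\mathfrak{M}_\mu\neq\emptyset$ uses. Given $r>0$, take $\delta=r/4$. The bounded ball $B(z,r/2)$ contains $y_k$ for infinitely many $k$, so by Bolzano--Weierstrass $\{y_k\}$ has a cluster point $w$ with $\|w-z\|\leqslant r/2$. Then $w\in\mathfrak{M}_{\nu^\varepsilon}=\mathfrak{M}_\mu$, a set that no longer depends on $\varepsilon$, and $\mu_n(B(z,r))\geqslant\mu_n(B(w,r/2))\to\infty$. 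Hence $z\in\mathfrak{M}_\mu$, and with this patch your argument is complete.
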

\begin{proof}
We must verify that the sequence of counting measures $\{\nu^x_n = \sum\limits_{k=0}^n\,\delta_{x_k},\,n\geqslant 0\}$ is equivalent to the increasing sequence of measures 
$\{\mu_n,\,n\geqslant 0\}$ .
 Let us first check that  the set $L^x$ of partial limits of  $\{x_n,\,n\geqslant 0\}$ coincides with $\mathfrak{M}_\mu$. Suppose that $u\in L^x$. For $\varepsilon>0$  let  $\{y_n,\,n\geqslant 0\}$ be a sequence satisfying condition 2 in the statement of the lemma. Then
 \[
 \overline{B}(u, \varepsilon)\cap L^y = \overline{B}(u, \varepsilon)\cap \mathfrak{M}_\mu \neq \emptyset.
 \]
 Hence, $u\in\mathfrak{M}_\mu$ and thus $L^x\subset \mathfrak{M}_\mu$. The reverse inclusion is checked in a similar manner. 

Now let us verify that the conditions in (\ref{Conditions2}) of Definition 3 are satisfied. To do this, we will take open sets $U_1$, $U_2$
such that $\overline{U}_1$ is compact, $\overline{U}_1\subset U_2$ and $U_1\cap\mathfrak{M}_\mu\neq \emptyset$. Then there exists $\varepsilon > 0$ such that $\overline{U}_1^{\varepsilon}\subset U_2$, where 
$\overline{U}^{\varepsilon}$ stands for the $\varepsilon$-neighbourhood of the set $\overline{U}_1$. Then for counting measures $\{\nu^x_n = \sum\limits_{k=0}^n\,\delta_{x_k},\,n\geqslant 0\}$ and $\{\nu^y_n = \sum\limits_{k=0}^n\,\delta_{y_k},\,n\geqslant 0\}$ 
we obtain the following inequalities 
\[
\unlim\limits_{n\to\infty}\,\frac{\mu_n(U_2)}{\nu^x_n(U_1)}\geqslant
\unlim\limits_{n\to\infty}\,\frac{\mu_n(U_2)}{\nu^y_n(U^\varepsilon_1)}\geqslant\unlim\limits_{n\to\infty}\,\frac{\mu_n(U_2)}{\mu_n(U^\varepsilon_1)}\geqslant 1,
\]
\[
\unlim\limits_{n\to\infty}\,\frac{\nu^x_n(U_2)}{\mu_n(U_1)}\geqslant
\unlim\limits_{n\to\infty}\,\frac{\nu^x_n(U_2)}{\nu^y_n(U^{\varepsilon/2}_1)}\geqslant\unlim\limits_{n\to\infty}\,\frac{\nu^x_n(U_2)}{\nu^x_n(U^\varepsilon_1)}\geqslant 1,
\]
that ensure the conditions of Definition 3 are satisfied.
\end{proof}
\begin{theorem}\label{ThmVisitationMeasuresIterations}
 	The sequence of measures
	$
	\bigl\{\varkappa_n = \sum\limits_{k=1}^n\,P\circ x_k^{-1},\; n\geqslant 1\bigr\}
	$
	is a sequence of visitation measures for $\{x_n,\,n\geqslant 1\}$ with probability one.
   
\end{theorem}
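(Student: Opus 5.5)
The plan is to apply Lemma \ref{LemmaForTheorem} with $\mu_n=\varkappa_n$, using the truncated sequences $\{x^m_n,\,n\geqslant m\}$ as the approximating sequences $\{y_n\}$. The issue is that Lemma \ref{VisitationMeasuresTruncated} supplies for $x^m$ the visitation measures $\varkappa^m_n=\sum_{k=m}^n P\circ(x^m_k)^{-1}$, not $\varkappa_n$. So the argument has three parts: approximate $x_n$ by $x^m_n$ uniformly in $n$; show that for large $m$ the deterministic measures $\varkappa^m_n$ and $\varkappa_n$ are comparable on neighbourhoods; and transfer equivalence via an $\varepsilon$-enlargement argument of the same kind as in the proof of Lemma \ref{LemmaForTheorem}.

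\textbf{Step 1 (uniform closeness of distributions).} From the proof of Lemma \ref{LemmapropertiesTruncatedSeq},
\[
|x^m_n-x_n|\leqslant \alpha^m\bigl(|x_0|+\sup_{k}|x_k|\bigr), \qquad \sup_k|x_k|\leqslant \frac{|\varphi(0)|\,n\text{-independent const}+\sup_k|\xi_k|}{1-\alpha}.
\]
By (\ref{Condition1}), $S=\sup_k|\xi_k|<\infty$ a.s., though possibly without moments. I would avoid needing moments by working with probabilities. For $\varepsilon>0$ and $R>0$,
\[
P\bigl(|x^m_k-x_k|>\varepsilon\bigr)\leqslant P(S>R)
\]
once $m$ is large enough (depending on $R$ and $\varepsilon$). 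Hence for every Borel $A$,
\[
\varkappa_n(A)\leqslant \varkappa^m_n(A^\varepsilon)+m+n\,P(S>R),
\]
and symmetrically with the roles of $\varkappa_n$ and $\varkappa^m_n$ exchanged. The term $n\,P(S>R)$ is dangerous, since $\varkappa_n(U)$ may grow more slowly than $n$.

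\textbf{Step 2 (the main obstacle).} To remove the $n\,P(S>R)$ term I would use (\ref{Condition1}) more precisely: $\sum_k P(|\xi_k|>C)<\infty$. Write $x_k-x^m_k$ as a contraction of $x_{k-m}-x_0$. Introduce the "bad" events $B_k=\{\exists j\leqslant k: |\xi_j|>C\}$; the bound on $|x_{k-m}|$ fails only if some $|\xi_j|>C$. Split further according to whether a large $\xi_j$ occurs with $j\leqslant k-m$ or not. Since $|\varphi(x)|\leqslant|\varphi(0)|+\alpha|x|$, on the event that all $|\xi_j|\leqslant C$ for $N\leqslant j\leqslant k$, the value $|x_{k-m}|$ is bounded by $\alpha^{k-m-N}|x_N|+(C+|\varphi(0)|)/(1-\alpha)$. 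The remaining probability mass is then controlled by $\sum_{j\geqslant N}P(|\xi_j|>C)$ plus a tail from $|x_N|$, which is a single fixed random variable. This yields
\[
\sum_{k\leqslant n}P\bigl(|x^m_k-x_k|>\varepsilon\bigr)\leqslant c(N,m)+n\,\delta,
\]
with $\delta$ arbitrarily small. That still carries a linear term, so I would instead prove the comparison on sets $U$ meeting the limit set, using that the counting measure of such a set tends to infinity. Concretely, I would compare $\varkappa_n$ and $\varkappa^m_n$ only through the almost-sure counting measures: both are equivalent to the counting measures of sequences that are eventually uniformly $\varepsilon$-close almost surely, by Lemma \ref{LemmapropertiesTruncatedSeq}. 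Making this honest is the step I expect to be hardest. A cleaner route may be to apply Theorem \ref{ThmDorogovtsev2}-type reasoning directly to the pair $(x_k,x^m_k)$ restricted to the good events.

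\textbf{Step 3 (conclusion).} Once $\varkappa_n$ and $\varkappa^m_n$ are shown mutually dominated up to $\varepsilon$-enlargements on neighbourhoods of limit points, for all large $m$, the measures $\varkappa_n$ inherit equivalence with the counting measures of $x^m$. I would then check $\mathfrak M_\varkappa\neq\emptyset$, which holds since $\varkappa_n$ concentrates on a bounded set up to summable mass. Lemma \ref{LemmaForTheorem}, with $y_n=x^m_n$ and $\uplim|x_n-y_n|\leqslant\varepsilon$ a.s., then gives the theorem. The identification $\mathfrak M_\varkappa=L^x$ is consistent with Theorem \ref{LimitSet}.
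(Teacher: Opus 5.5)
There is a genuine gap, and it is structural, not only the missing estimate you flag in Step 2. In Step 3 you apply Lemma \ref{LemmaForTheorem} with $y_n=x^m_n$ and $\mu_n=\varkappa_n$. That lemma requires $\{\varkappa_n\}$ to be \emph{exactly} a sequence of visitation measures of $\{x^m_n\}$. By Definition 3 this forces $\mathfrak{M}_\varkappa=L^x_m$ for the $m$ you use, whereas the theorem forces $\mathfrak{M}_\varkappa=L^x$. In general $L^x_m\neq L^x$; Theorem \ref{LimitSet} only recovers $L^x$ as an upper limit of the sets $L^x_m$. For example, take $\varphi(x)=\alpha x$, $x_0=0$ and i.i.d.\ $\xi_n$ uniform on $\{0,1\}$. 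Then $L^x_m=\{\sum_{j<m}\alpha^j b_j\colon b_j\in\{0,1\}\}$ is finite, while $L^x=\mathfrak{M}_\varkappa=\{\sum_{j\geqslant 0}\alpha^j b_j\colon b_j\in\{0,1\}\}$ is uncountable. So no comparison of $\varkappa_n$ with $\varkappa^m_n$ up to $\varepsilon$-enlargements can make $\varkappa_n$ inherit equivalence with the counting measures of $x^m$, because Definition 3 demands equal limit sets.

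Moreover, that comparison is never actually obtained. Your only quantitative bound carries the additive error $n\delta$, which is useless exactly when $\varkappa_n(U)=o(n)$, the case of interest (cf.\ Section 3). Your proposed remedy says $\varkappa_n$ and $\varkappa^m_n$ are both equivalent to counting measures of a.s.\ $\varepsilon$-close sequences. That presupposes $\varkappa_n$ is equivalent to the counting measures of $\{x_n\}$, which is the statement to be proved. A smaller point: under (\ref{Condition1}) the series $\sum_k P(|x_k|>R)$ may diverge for every $R$. So $\mathfrak{M}_\varkappa\neq\emptyset$ should be derived from tightness, $\sup_k P(|x_k|>R)\to 0$, not from summable mass outside a bounded set.

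The idea you are missing is the paper's coupling. Instead of $x^m_n$, which restarts the last $m$ steps from $x_0$ and so has the wrong laws, the paper uses $y^m_n=F_n(\eta_{1,n},\dots,\eta_{n-m,n},\xi_{n-m+1},\dots,\xi_n)$. That is, it replaces the distant past $\xi_1,\dots,\xi_{n-m}$ by a fresh independent copy for each $n$. Then $y^m_n\overset{d}{=}x_n$, so the deterministic measures attached to $y^m$ are $\varkappa_n$ itself (up to finitely many terms) for \emph{every} $m$. No comparison between $\varkappa_n$ and $\varkappa^m_n$ is needed, and the limit set of $y^m$ is automatically $\mathfrak{M}_\varkappa$. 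Since $y^m_n$ depends on $\{\xi_k\}$ only through the window $\xi_{n-m+1},\dots,\xi_n$, with the rest independent randomness, the argument behind Theorems \ref{ThmDorogovtsev1} and \ref{ThmDorogovtsev2} applies. The bound $|y^m_n-x^m_n|\leqslant\alpha^m|z_n-x_0|$ with $z_n\overset{d}{=}x_{n-m}$ is meant to supply the closeness required in Lemma \ref{LemmaForTheorem}.

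If you adopt this route, check that last step yourself. For the Theorem \ref{ThmDorogovtsev1}-type argument the copies have to be independent over $n$. Then $\limsup_n|z_n|<\infty$ a.s.\ amounts to $\sum_n P(|x_n|>C')<\infty$ for some $C'$. The paper takes this as holding by construction, but (\ref{Condition1}) alone does not give it: take $\varphi(x)=\alpha x$, $x_0=0$, $P(|\xi_1|>t)=1/\ln(e+t)$ and $\xi_k=0$ for $k\geqslant 2$.
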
 
\begin{proof}
For every $n\geqslant 1$ consider a set $\{\eta_{n,k}, \,1\leqslant k\leqslant n\}$ of independent random variables that are independent of the sequence $\{\xi_n,\,n\geqslant 1\}$ and such that for any $k=\overline{1, n}$
\[
\eta_{n, k}\,\overset{d}{=}\,\xi_n.
\]
For a fixed $m\geqslant 1$ let us construct a sequence $\{y^m_n,\,n\geqslant m\}$ as follows
\[
y^m_n = F_n(\eta_{1, n},\dots, \eta_{n-m, n}, \xi_{n-m+1}, \dots, \xi_n),
\]
where $F_n$ is defined in (\ref{Function_Fm}). By similar reasoning used in the proof of Theorem \ref{ThmDorogovtsev1} in \cite{Dorogovtsev1}, it can be proven that $\{\sum\limits_{k=m}^n\,P\circ (y^m_k)^{-1},\,n\geqslant m\}$ is a sequence of visitation measures for $\{y^m_n,\,n\geqslant m\}$. 

Given that $y^m_n\,\overset{d}{=}\,x_n$ for every $n\geqslant m$ and that by construction

\[
\sup\limits_{n\geqslant m}\,|x^m_n - y^m_n|\to 0,\quad m\to\infty, 
\]
\[
\sup\limits_{n\geqslant m}\,|x_n - y^m_n|\to 0,\quad m\to\infty, 
\]
the statement of the theorem follows from Lemma \ref{LemmaForTheorem}.
\end{proof}

\noindent\textbf{3.~Iterations with Gaussian perturbations.}

In this section, we apply the notion of visitation measures to the analysis of convergence of iterations
\begin{equation}\label{EqGaussIterations}
x_{n+1} = \varphi(x_n)+ \xi_{n+1}, \quad n\geqslant 0
\end{equation}
with
\[
\xi_n =\frac{\eta_{n}}{\sqrt{2\ln (n+1)}}, \quad n\geqslant 1,
\]
where $\{\eta_n,\;n\geqslant 1\}$ is a sequence of \emph{iid} standard Gaussian random variables. 
Note that if $x_*$ is the unique solution to the equation  $x=\varphi(x)$, then the sequence
\[
\widetilde{x}_n= x_n-x_*,\quad n\geqslant 1
\]
satisfies
\[
\widetilde{x}_{n+1} = \widetilde{\varphi}(\widetilde{x}_n)+ \frac{\eta_{n+1}}{\sqrt{2\ln (n+2)}}, \quad n\geqslant 0,
\]
with the function $\widetilde{\varphi}(y) = \varphi(y+x_*) - x_*$, for which $\widetilde{\varphi}(0)=0$. Therefore, without loss of generality, we will assume in what follows that $\varphi(0) =0$ in (\ref{EqGaussIterations}) and thus $x_*=0$.

\begin{lemma}\label{LemmaPconvergenceIterations}
\[
x_n\,\xrightarrow{P}\,0,\quad n\to\infty.
\]
    
\end{lemma}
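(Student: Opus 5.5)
The plan is to unroll the recursion and use the contraction property of $\varphi$ together with $\varphi(0)=0$. From (\ref{LipschitzCondition}) with $y=0$ we get $|\varphi(x)|\leqslant\alpha|x|$ for all $x$, hence
\[
|x_{n}|\leqslant \alpha|x_{n-1}|+|\xi_{n}|,\quad n\geqslant 1,
\]
and by induction
\[
|x_n|\leqslant \alpha^n|x_0|+\sum_{j=1}^{n}\alpha^{n-j}|\xi_j|.
\]
The deterministic term $\alpha^n|x_0|$ tends to zero. It therefore suffices to show that the random sum $S_n=\sum_{j=1}^{n}\alpha^{n-j}|\xi_j|$ tends to zero in probability. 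We will prove the stronger statement that $ES_n\to 0$, and then apply Markov's inequality.

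Since $\xi_j=\eta_j/\sqrt{2\ln(j+1)}$, we have $E|\xi_j|=c/\sqrt{2\ln(j+1)}$ with $c=E|\eta_1|=\sqrt{2/\pi}$. Set $b_j=1/\sqrt{2\ln(j+1)}$. This is a decreasing sequence tending to $0$. Then
\[
ES_n=c\sum_{j=1}^n\alpha^{n-j}b_j .
\]
This is the convolution of a summable geometric kernel with a null sequence, so it tends to zero. To check this, fix $N$ and split the sum at $j=N$:
\[
\sum_{j=1}^{N}\alpha^{n-j}b_j\leqslant b_1\frac{\alpha^{n-N}}{1-\alpha}\to 0\quad (n\to\infty),
\qquad
\sum_{j=N+1}^{n}\alpha^{n-j}b_j\leqslant \frac{b_{N+1}}{1-\alpha}.
\]
Hence $\limsup_n ES_n\leqslant c\,b_{N+1}/(1-\alpha)$ for every $N$. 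Letting $N\to\infty$ gives $ES_n\to0$. Markov's inequality then yields $P(|x_n|>\varepsilon)\leqslant \varepsilon^{-1}(\alpha^n|x_0|+ES_n)\to0$, which is the claim.

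I do not expect a genuine obstacle here; the only step needing care is the Toeplitz-type splitting above. One could alternatively avoid expectations by splitting $S_n$ in the same way and using the Gaussian tail to bound $P(|\xi_j|>\delta)$. However, the $L^1$ route is shorter and does not need the a.s. bound from Lemma~\ref{LemmapropertiesTruncatedSeq}. Note that almost sure convergence is not claimed and in fact fails: by the Example, the $\xi_n$ themselves have partial limits filling $[-1,1]$. This is why only convergence in probability is asserted.
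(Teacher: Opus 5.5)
Your proof is correct and follows the paper's argument. The paper also bounds $\mathbb{E}\,|x_n|\leqslant\alpha^n|x_0|+\sum_{k=1}^n\alpha^{n-k}\,\mathbb{E}\,|\xi_k|$ via the Lipschitz condition, uses the Toeplitz theorem to send the sum to zero, and concludes with the Chebyshev (Markov) inequality; your splitting at $j=N$ is simply an explicit proof of that Toeplitz step.
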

\begin{proof}
Condition (\ref{LipschitzCondition}) implies
\[
\mathbb{E}\,|x_n|\leqslant\alpha^n\,|x_0|+\sum\limits_{k=1}^n\,\alpha^{n-k}\,\mathbb{E}\,|\xi_k|.
\]
From the construction of $\{\xi_k,\;k\geqslant 1\}$ and the Toeplitz theorem it follows that
\[
\sum\limits_{k=1}^n\,\alpha^{n-k}\,\mathbb{E}\,|\xi_k| \to 0,\quad n\to\infty.
\]
Hence, by the Chebyshev inequality, for any $\varepsilon>0$
\[
P(|x_n|\geqslant \varepsilon)\leqslant \frac{\mathbb{E}\,|x_n|}{\varepsilon}\,\to 0,\quad n\to\infty.
\]
\end{proof}
\begin{lemma}
    For an arbitrary $[a, b]\subset (0, 1)$ and any $\varepsilon>0$
    \[
    \lim\limits_{n\to\infty}\,\frac{1}{n^{1-a-\varepsilon}}\,\sum\limits_{k=1}^n\,\mathbb{I}_{[a, b]}(x_k) =  +\infty,\quad a.s.
    \]
    For any $\delta > 0$
    \[
    \lim\limits_{n\to\infty}\,\frac{1}{n}\,\sum\limits_{k=1}^n\,\mathbb{I}_{[-\delta, \delta]}(x_k)=1,\quad a.s.
    \]
\end{lemma}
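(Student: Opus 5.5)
The plan is to reduce both statements to estimates on the deterministic visitation measures $\varkappa_n=\sum_{k=1}^n P\circ x_k^{-1}$ provided by Theorem~\ref{ThmVisitationMeasuresIterations}. The equivalence in Definition~3 only compares measures of nested open sets $\overline{U_1}\subset U_2$. So for the first claim I take $U_1=(a',b')$ with $a<a'<b'<b$ and $U_2\subset(a,b)$, after checking that $U_1$ meets the limit set. Then a.s.
\[
\liminf_{n\to\infty}\frac{\sum_{k=1}^n\mathbb{I}_{[a,b]}(x_k)}{\varkappa_n(U_1)}\geqslant 1 .
\]
It therefore suffices to show $\varkappa_n\bigl((a',b')\bigr)\geqslant c\,n^{1-a-\varepsilon}$ eventually, with $a'$ close to $a$. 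Since $a'$ can be taken arbitrarily close to $a$, any lower bound of the form $n^{1-a'^2-\varepsilon'}$ or $n^{1-a'-\varepsilon'}$ is enough. Note that $a^2<a$ on $(0,1)$, so the exponent $1-a$ in the statement is weaker than the Gaussian one.

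For the lower bound on $P(x_k\in(a',b'))$ I use the representation $x_k=\xi_k+\varphi(x_{k-1})$ with $\xi_k$ independent of $x_{k-1}$. On the event $\{|x_{k-1}|\leqslant\delta\}$ we have $|\varphi(x_{k-1})|\leqslant\alpha\delta$, because $\varphi(0)=0$. So on that event $x_k\in(a',b')$ as soon as $\xi_k\in(a'+\alpha\delta,\,b'-\alpha\delta)$. By independence,
\[
P\bigl(x_k\in(a',b')\bigr)\geqslant P(|x_{k-1}|\leqslant\delta)\,P\bigl(\xi_k\in(a'+\alpha\delta,b'-\alpha\delta)\bigr).
\]
By Lemma~\ref{LemmaPconvergenceIterations} the first factor tends to $1$. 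The Gaussian tail estimate from the Example gives, for the second factor, a lower bound of order $k^{-(a'+\alpha\delta)^2-\varepsilon'}$. Summing over $k$ yields $\varkappa_n((a',b'))\gtrsim n^{1-(a'+\alpha\delta)^2-\varepsilon'}$. Choosing $\delta,\varepsilon'$ small and $a'$ close to $a$, the exponent exceeds $1-a-\varepsilon/2$, and the first assertion follows.

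This also settles the requirement $U_1\cap\mathfrak{M}\neq\emptyset$: since $\varkappa_n(U_1)\to\infty$, Theorem~\ref{ThmVisitationMeasuresIterations} together with Definition~2 guarantees a point of growth in $\overline{U_1}$. To get that point inside $U_1$ itself, I apply the argument to a slightly smaller interval.

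For the second claim I again work with $\varkappa_n$. Lemma~\ref{LemmaPconvergenceIterations} and Cesàro averaging give $\varkappa_n((-\delta/2,\delta/2))/n\to1$, hence $\varkappa_n(\mathbb{R}\setminus[-\delta/2,\delta/2])=o(n)$.

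The main obstacle is that Definition~3 controls ratios only on sets with compact closure meeting $\mathfrak{M}$, whereas the complement of $[-\delta,\delta]$ is unbounded. I handle this in two steps. First, a.s.\ boundedness of $\sup_n|x_n|$ (as in the proof of Lemma~\ref{LemmapropertiesTruncatedSeq}) lets me replace the complement by a bounded set. More concretely, the explicit bound $\sup_n|x_n|\leqslant \frac{1}{1-\alpha}\sup|\xi_n|$, combined with $\limsup|\xi_n|\leqslant1$ a.s., gives $|x_n|\leqslant R:=2/(1-\alpha)+1$ for all large $n$. Second, I cover $[\delta,R]$ and $[-R,-\delta]$ by finitely many small open intervals $U_1^{(i)}$, each with an enlargement $U_2^{(i)}$ that avoids $[-\delta/2,\delta/2]$. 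For the intervals meeting $\mathfrak{M}$, the second inequality in (\ref{Conditions2}) gives a.s.
\[
\nu_n(U_1^{(i)})\leqslant(1+o(1))\,\varkappa_n(U_2^{(i)})=o(n).
\]
Intervals with closure disjoint from $\mathfrak{M}=L^x$ are visited only finitely often, by compactness. Summing over the finite cover gives $\frac1n\sum_{k\leqslant n}\mathbb{I}_{\{|x_k|>\delta\}}\to0$, which is the second assertion.
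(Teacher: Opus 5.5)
Your proof is correct, and for the first assertion it is the paper's argument. The paper bounds the density $q_n(u)=\int p_n(u-v)\,\mu_{n-1}\circ\varphi^{-1}(dv)$ of $x_n$ from below by $\tfrac12 p_n(a+2\varepsilon_1)$ on $[a+\varepsilon_1,a+2\varepsilon_1]$, using Lemma~\ref{LemmaPconvergenceIterations} to concentrate $\varphi(x_{n-1})$ near $0$. That is your inequality $P(x_k\in(a',b'))\geqslant P(|x_{k-1}|\leqslant\delta)\,P(\xi_k\in(a'+\alpha\delta,b'-\alpha\delta))$ written with densities. You also supply bookkeeping the paper leaves implicit: the nested open sets required by Definition~3, the check $U_1\cap\mathfrak{M}\neq\emptyset$, and the exponent comparison $a^2<a$.

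For the second assertion, the ``main obstacle'' you describe does not actually arise, so the boundedness-plus-covering argument can be dropped. Apply the same inequality of (\ref{Conditions2}) that you used for the first claim, with $U_1=(-\delta/2,\delta/2)$, $U_2=(-\delta,\delta)$ and $\nu_n=\sum_{k=1}^n\delta_{x_k}$. Here $0\in\mathfrak{M}$, since $\varkappa_n(B(0,r))/n\to1$ for every $r>0$. Then
\[
\unlim_{n\to\infty}\frac{\nu_n(U_2)}{n}=\unlim_{n\to\infty}\frac{\nu_n(U_2)}{\varkappa_n(U_1)}\cdot\lim_{n\to\infty}\frac{\varkappa_n(U_1)}{n}\geqslant 1 .
\]
The matching upper bound is trivial because $\nu_n(\mathbb{R})=n$: the total mass controls the unbounded complement for free. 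This is the transfer the paper has in mind when it reduces the claim to $\varkappa_n$ of a neighbourhood of $0$ being asymptotically $n$.

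Your covering route also works, but as written it has two small holes. First, an interval with $U_1^{(i)}\cap\mathfrak{M}=\emptyset$ but $\overline{U_1^{(i)}}\cap\mathfrak{M}\neq\emptyset$ falls into neither of your two cases. You would fix this by enlarging $U_1^{(i)}$ slightly inside $U_2^{(i)}$ before invoking (\ref{Conditions2}). Second, the eventual bound $|x_n|\leqslant R$ needs the tail estimate $\uplim|x_n|\leqslant\frac{1}{1-\alpha}\uplim|\xi_n|$, obtained by restarting the recursion at a late time, rather than the $\sup$ bound you quote.
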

\begin{proof}
    Denote
\[
\mu_n=P\circ x_n^{-1}, \quad \zeta_n = P\circ \left(\frac{\eta_{n}}{\sqrt{2\ln (n+1)}}\right)^{-1},\quad n\geqslant 1.
\]
In Theorem \ref{ThmVisitationMeasuresIterations} it was proved that the sequence $\{\varkappa_n = \sum\limits_{k=1}^n\,\mu_k,\, n\geqslant 1\}$ is with probability one a sequence of visitation measures for $\{x_n,\,n\geqslant 1\}$. Therefore, it is enough to check that
\begin{equation}\label{Check1}
\lim\limits_{n\to\infty}\,\frac{1}{n^{1-a-\varepsilon}}\,\mu_n([a, b]) = +\infty,
\end{equation}
\begin{equation}\label{Check2}
\lim\limits_{n\to\infty}\,\frac{1}{n}\,\mu_n([-\delta, \delta]) = 1.
\end{equation}
Note that the measures $\mu_n$, $\zeta_n$ satisfy the following relation
\[
\mu_{n+1}=(\mu_n\circ \varphi^{-1})\star \zeta_{n+1},\quad n\geqslant 0
\] 
and therefore $\mu_n$ has the density
\[
q_n(u) = \int\limits_\mathbb{R}\,p_n(u-v)\,\mu_{n-1}\circ \varphi^{-1}(dv),
\]
where $p_n$ is the density of $\zeta_n$.

From Lemma \ref{LemmaPconvergenceIterations} it follows that for $\varepsilon_1\in \bigl(0, min((b-a)/2, \varepsilon)\bigr)$
\[
\exists n_0\geqslant 1\;\forall n\geqslant n_0\;\forall u\in [a+\varepsilon_1, a+2\varepsilon_1]\colon\qquad q_n(u)\geqslant \frac{1}{2}\,p_n(a+2\varepsilon_1)
\]
and this implies (\ref{Check1}). Condition (\ref{Check2}) is proven in a similar manner.
\end{proof}

\begin{corollary}
  For the sequence $\{x_n,\,n\geqslant 1\}$  of iterations given by (\ref{EqGaussIterations}) 
  \[
  S-\lim\limits_{n\to\infty}\,x_n = 0,\quad a.s.
  \]
\end{corollary}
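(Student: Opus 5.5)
The corollary should follow directly from the second assertion of the preceding lemma, by taking complements. Statistical convergence of $\{x_n,\,n\geqslant 1\}$ to zero means that for every $\delta>0$
\[
\frac{1}{n}\,\sum\limits_{k=1}^n\,\mathbb{I}_{\{|x_k|>\delta\}}\,\to\,0,\quad n\to\infty,
\]
with probability one. Since $\mathbb{I}_{\{|x_k|>\delta\}} = 1-\mathbb{I}_{[-\delta,\delta]}(x_k)$, we have
\[
\frac{1}{n}\,\sum\limits_{k=1}^n\,\mathbb{I}_{\{|x_k|>\delta\}} = 1-\frac{1}{n}\,\sum\limits_{k=1}^n\,\mathbb{I}_{[-\delta,\delta]}(x_k).
\]
By the lemma, the right-hand side tends to $0$ almost surely for each fixed $\delta>0$.

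The one remaining point is the order of quantifiers. The definition of statistical convergence requires a single exceptional null set that works for all $\varepsilon>0$ at once, while the lemma gives a null set $N_\delta$ for each $\delta$ separately. I would handle this by restricting to $\delta=1/j$, $j\geqslant 1$, and setting $N=\bigcup_j N_{1/j}$, which is still a null set. For $\omega\notin N$ and any $\varepsilon>0$, choose $j$ with $1/j<\varepsilon$. Then $\mathbb{I}_{\{|x_k|>\varepsilon\}}\leqslant \mathbb{I}_{\{|x_k|>1/j\}}$, so the averages for $\varepsilon$ are bounded by the averages for $1/j$, and those tend to zero. This gives $S-\lim_{n\to\infty} x_n = 0$ almost surely.

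The substance of the argument is already in the lemma. There, Theorem \ref{ThmVisitationMeasuresIterations} is used to replace the counting measures by $\varkappa_n$, and the Cesàro average of $P(|x_k|\leqslant\delta)\to 1$, which holds by Lemma \ref{LemmaPconvergenceIterations}, tends to $1$. If one wanted to double-check that step rather than cite it, the only delicate point is passing from "visitation measures are equivalent" to a statement about ratios to $n$. I would do this by applying Definition 3 with $U_1=(-\delta,\delta)$ and $U_2=(-2\delta,2\delta)$. This yields $\nu_n(U_2)\gtrsim \varkappa_n(U_1)\sim n$, and then $\delta$ is rescaled. Within the corollary itself, nothing beyond the complement identity and the countable union of null sets is needed.
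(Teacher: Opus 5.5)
Your proposal is correct and follows the paper's route: the paper gives no separate proof, since the corollary is an immediate consequence of the second assertion of the preceding lemma via $\mathbb{I}_{\{|x_k|>\delta\}} = 1-\mathbb{I}_{[-\delta,\delta]}(x_k)$. Your restriction to $\delta=1/j$, so that the exceptional null set is a countable union, is a detail the paper leaves implicit. Your optional check via Definition 3 with $U_1=(-\delta,\delta)$, $U_2=(-2\delta,2\delta)$ and $\varkappa_n(U_1)\sim n$ also correctly fills in the step that the lemma's proof only sketches.
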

Note that the example considered  in this section illustrates that visitation measures contain all the information about all the partial limits of the sequence, not just its statistical limit.

{\footnotesize

\vskip10pt

Institute of Mathematics of the National Academy of Sciences of Ukraine

Kyiv, Ukraine

Email andrey.dorogovtsev@gmail.com % Author's institution address, e-mail address.

National Technical University of Ukraine <<Igor Sikorsky Kyiv Polytechnic Institute>>

Kyiv, Ukraine

Email nishchenkoii-ipt@lll.kpi.ua 
\vskip10pt
}
\hfill {\it Received \ 14.06.2026


\begin{thebibliography}{9}
\parskip-2pt
\bibitem{Dorogovtsev1} A. A. Dorogovtsev \emph{Some characteristics of sequences of iterations with random perturbations}, Ukr. Math. J. \textbf{48}, (1996), 1182 -- 1201.  https://doi.org/10.1007/BF02383865.
\bibitem{Dorogovtsev2} A. A. Dorogovtsev \emph{Visiting measures and an ergodic theorem for a sequence of iterations with random perturbations}, Ukr Math J \textbf{51}, (1999), 134 -- 139. https://doi.org/10.1007/BF02591922
\bibitem{Fast} H. Fast \emph{Sur la convergence statistique}, Colloq.Math.\textbf{2}, (1951), 241 -- 244. 
\bibitem{Steinhaus} H. Steinhaus \emph{Sur la convergence ordinaire et la convergence asymptotique}, Colloq. Math. \textbf{2}, (1951), 73-74. 
\bibitem{Vlasenko} M. A. Vlasenko \emph{Local visitation measures for some sequences of random variables with de\-crea\-sing coefficients,} Theory of probability and Its Applications, \textbf{49} (1), (2005), 176 -- 186. 
doi. 10.1137/S0040585X97980919,
https://api.semanticscholar.org/CorpusID:123288655

\end{thebibliography}
\end{document}